\theoremstyle{plain}
\newtheorem{theorem}[equation]{Theorem}
\newtheorem{proposition}[equation]{Proposition}
\newtheorem{lemma}[equation]{Lemma}
\newtheorem{corollary}[equation]{Corollary}
\newcommand*{\mailto}[1]{\href{mailto:#1}{#1}}
\newcommand*{\Z}{\mathbb{Z}}
\newcommand*{\R}{\mathbb{R}}
\def\<{\left\langle}
\def\>{\right\rangle}
\newcommand{\dif}{\mathrm{d}}
\newcommand{\BMO}{\mathrm{BMO}}
\newcommand{\diam}{\ell}
\newcommand{\dist}{\mathrm{dist}}
\newcommand*{\tdif}[3][]{\frac{\dif^{ #1} #2}{\dif { #3}^{ #1}}}
\newcommand{\V}[3][r]{\| #2 \|_{V^{#1}_{#3}}}
\newcommand{\hV}[3][r]{\| #2 \|_{\tilde V^{#1}_{#3}}}
\newcommand{\s}{\tilde S}
\renewcommand{\r}{\tilde R}
\numberwithin{equation}{section}
\begin{document}
\subjclass[2010]{37A45 (Primary), 26A45 (Secondary)}
\title{Weighted and vector-valued variational estimates for ergodic averages}
\author{Ben Krause}
\address{UCLA Math Sciences Building\\
  Los Angeles\\
  CA 90095-1555\\
  USA}
\email{\mailto{benkrause23@math.ucla.edu}}
\author{Pavel Zorin-Kranich}
\address{Universität Bonn\\
  Mathematisches Institut\\
  Endenicher Allee 60\\
  53115 Bonn\\
  Germany
}
\email{\mailto{pzorin@uni-bonn.de}}
\urladdr{\url{http://www.math.uni-bonn.de/people/pzorin/}}
\begin{abstract}
We prove weighted and vector-valued variational estimates for ergodic averages on $\R^d$.
The weighted square function estimate relating ergodic averages to the dyadic martingale is obtained using an $\ell^r$ version of a reverse Hölder inequality for variation seminorms.
\end{abstract}
\maketitle

\section{Introduction}
We denote ergodic averages on $\R^{d}$, $d\geq 1$, by
\[
A_{t}f(x) = \fint_{B(0,t)} f(x+y) \dif y,
\]
where $B(0,t)$ is the ball of radius $t$ centered at zero and $\fint_{A} = |A|^{-1}\int_{A}$.
The homogeneous $r$-variation norm is denoted by
\[
\hV{a_{t}}{t\in I} := \sup_{t_{0}<t_{1}<\dots<t_{J} \in I} \big( \sum_{j=1}^{J} |a_{t_{j}} - a_{t_{j-1}}|^{r} \big)^{1/r}
\]
and the inhomogeneous $r$-variation norm by
\[
\V{a_{t}}{t\in I} := \sup_{t\in I} |a_{t}| + \hV{a_{t}}{t\in I}.
\]
Variational estimates for ergodic averages of the form
\[
\| \V{A_{t}f(x)}{t>0} \|_{L^{p}_{x}} \lesssim \|f\|_{L^{p}}
\]
have been introduced by Bourgain \cite{MR1019960} and a complete theory has been developed by a number of authors \cite{MR1996394,MR1645330,MR2434308} covering the full possible range of exponents $p,r$ including the end points.
Our purpose is to obtain weighted and vector-valued versions of these results.

A \emph{weight} is a positive function on $\R^{d}$.
The $A_{1}$ constant of a weight is given by
\begin{equation}
\label{eq:A1}
[w]_{A_{1}} = \sup_{x} Mw(x)/w(x),
\end{equation}
the $A_{p}$ constant, $1<p<\infty$, is given by
\begin{equation}
\label{eq:Ap}
[w]_{A_{p}} = \sup_{Q} \big( |Q|^{-1} \int_{Q} w \big) \big( |Q|^{-1} \int_{Q} w^{-1/(p-1)} \big)^{p-1},
\end{equation}
and a weight $w$ is $A_{\infty}$ if
\begin{equation}
\label{eq:Ainfty}
w(E)/w(Q) \leq C_{w} (|E|/|Q|)^{\delta}
\end{equation}
for some $\delta\in(0,1]$ and all $E\subset Q$.
Here and later the letter $Q$ denotes a cube in $\R^{d}$, not necessarily dyadic unless explicitly stated so.
We refer to \cite[Chapter IV]{MR807149} for the basic properties of $A_{p}$ weights.
Note that all constants in the results from \cite[Chapter IV]{MR807149} that we use can be taken to depend only on the $A_{p}$ constants of the weights involved in them and not otherwise on the weight.
This kind of dependence is required for the Rubio de Francia extrapolation theorem to apply (see e.g.\ \cite[Theorem 3.1]{MR2754896}).

The set of dyadic cubes in $\R^{d}$ with side length $2^{k}$ is denoted by $\mathcal{Q}_{k}$.
For a point $x$ we write $\mathcal{Q}_{k}(x)$ for the unique (away from a set of measure zero) dyadic cube with side length $2^{k}$ containing $x$.
The conditional expectation onto the $\sigma$-algebra generated by $\mathcal{Q}_{k}$ is denoted by $E_{k}$.

Ergodic averages will be compared with a dyadic martingale using the short variations
\[
S_{k}f := \V[2]{A_{t}f-E_{k}f}{2^{k}\leq t\leq 2^{k+1}}.
\]
Following \cite{2013arXiv1309.2336K} we will in fact use the (larger) smoothed version of the short variations given by
\[
\s_{k}f(x) := \sup_{y\in 3\mathcal{Q}_{k}(x)} S_{k} f(y).
\]
Here, $3\mathcal{Q}_k(x)$ denotes the concentric cube to $\mathcal{Q}_k(x)$ with three times the side length.
The smoothed square function is defined by
\[
\s f := (\sum_{k} (\s_{k} f)^{2})^{1/2}.
\]
Weighted bounds for a discrete version of $\s$ have been proved in \cite{2013arXiv1309.2336K} using the Auscher--Martell extrapolation theorem with limited range of exponents (see \cite[Theorem 7.1]{MR2754896}).
We present a direct proof relying on an $\ell^{r}$ version of a reverse H\"older inequality from \cite{MR1996394} and another alternative proof using a good $\lambda$ inequality.
\begin{theorem}
\label{thm:square}
The operator $\s$ is bounded
\begin{enumerate}
\item\label{thm:square:Ap} on $L^{p}(w)$ for every $w\in A_{p}$, $1<p<\infty$, and
\item\label{thm:square:A1} as an operator $L^{1}(w)\to L^{1,\infty}(w)$ for every $w\in A_{1}$.
\end{enumerate}
In each case the operator norm is bounded in terms of the $A_{p}$ constant of the weight.
\end{theorem}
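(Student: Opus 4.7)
The plan is to prove part (1) via a Coifman--Fefferman style good-$\lambda$ inequality comparing $\tilde{S} f$ with the Hardy--Littlewood maximal function $M f$, and to prove part (2) via a Calderón--Zygmund decomposition adapted to $w \in A_1$. The starting point for the good-$\lambda$ argument is the unweighted $L^p$ boundedness of $\tilde{S}$ for $1 < p < \infty$, which is available from the variational theory developed in \cite{MR1996394}.

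For part (1), I would prove
\[
\bigl|\{\tilde{S} f > 2\lambda,\ Mf \leq \gamma \lambda\}\bigr| \leq C \gamma^{\delta} \bigl|\{\tilde{S} f > \lambda\}\bigr|
\]
by Whitney-decomposing the level set $\{\tilde{S} f > \lambda\}$ into cubes $\{Q_j\}$. Fixing such a cube $Q$ with some $x_0 \in Q$ satisfying $Mf(x_0) \leq \gamma \lambda$ and a neighbor $y^* \notin \{\tilde{S} f > \lambda\}$, split $f = f \chi_{5Q} + f \chi_{(5Q)^c}$. The local part is controlled by the unweighted $L^p$ bound. For the far part the smoothing step in $\tilde{S}_k$ is crucial: at small scales $2^k < \ell(Q)/100$, the short variation $S_k(f\chi_{(5Q)^c})$ vanishes on the interior of $Q$, while at large scales $2^k \geq \ell(Q)/100$, the supremum over $3 \mathcal{Q}_k(x)$ makes $\tilde{S}_k(f\chi_{(5Q)^c})$ constant across $Q$. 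Consequently $\tilde{S}(f\chi_{(5Q)^c})$ is essentially constant on $Q$ up to an $O(M f(x_0)) = O(\gamma \lambda)$ error, and the comparison with $\tilde{S} f(y^*) \leq \lambda$ closes the good-$\lambda$ loop. Integration in $\lambda$ against $w \in A_\infty$, together with Muckenhoupt's weighted bound for $M$ on $L^p(w)$, $w \in A_p$, then yields part (1).

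For part (2), a weighted Calderón--Zygmund decomposition of $f$ at level $\lambda$ yields $f = g + \sum_j b_j$ with $|g| \leq C \lambda$, each $b_j$ supported on a cube $Q_j$ with $\int b_j = 0$, and $\sum_j w(Q_j) \leq C \lambda^{-1} \|f\|_{L^1(w)}$. The good part is handled via $L^2(w)$, extrapolated from part (1). The bad-part contribution on $\bigcup_j 2 Q_j$ is absorbed by $A_1$-doubling; on its complement I would use the tail estimate that for $b$ mean zero on a cube $Q$ and $x \notin 2Q$, both $A_t b(x)$ and $E_k b(x)$ are supported on a single dyadic scale $2^k \sim \dist(x,Q) + \ell(Q)$, with pointwise size $O(\|b\|_{L^1}/(\dist(x,Q) + \ell(Q))^d)$, integrating to $O(\|b\|_{L^1})$ outside $2Q$. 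The main technical obstacle is the localization step in the good-$\lambda$ argument: the smoothing $\sup_{y \in 3 \mathcal{Q}_k(x)}$ in the definition of $\tilde{S}_k$ is introduced precisely to make $\tilde{S}_k(f\chi_{(5Q)^c})$ pointwise constant on $Q$ at large scales (rather than merely Hölder-regular), which is what permits the clean comparison between $\tilde{S} f$ on $Q$ and on a neighboring boundary witness $y^*$.
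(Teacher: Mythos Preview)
Your good-$\lambda$ strategy for part~(1) is close to the paper's alternative argument in Appendix~A, and your observation that the smoothing $\sup_{y\in 3\mathcal{Q}_{k}(x)}$ forces $\s_{k}$ to be constant on a maximal dyadic $Q$ at large scales is exactly right. There is, however, a mismatch in the inputs: with unweighted $L^{p}$ boundedness for some fixed $p>1$ as your starting point, the local piece $\s(f\chi_{5Q})$ is controlled in measure via $\int_{5Q}|f|^{p}\lesssim|Q|\,M_{p}f(x_{0})^{p}$, not via $Mf(x_{0})$. So the good-$\lambda$ inequality you actually obtain compares $\s f$ with $M_{p}f$, and after integration against $w\in A_{\infty}$ this yields $\|\s f\|_{L^{q}(w)}\lesssim\|f\|_{L^{q}(w)}$ only for $w\in A_{q/p}$, not the full $A_{q}$ class. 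The paper's Appendix~A repairs this by comparing with the sharp function $M_{p}^{\sharp}f$, exploiting the invariance of $S_{k}$ under addition of constants so that $f$ may be replaced by $(f-c)1_{CQ}$ at the small scales; alternatively your approach would close if you used unweighted weak-type $(1,1)$ as input. The paper's \emph{main} proof of part~(1) is different again: it decomposes $f$ into martingale differences $d_{j}$, proves the off-diagonal estimate $\|\s_{k}d_{k+j}\|_{L^{2}(w)}\lesssim 2^{-\epsilon|j|}\|d_{k+j}\|_{L^{2}(w)}$ via the reverse H\"older inequality of Section~2, and sums against the weighted dyadic square function.

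Part~(2) contains a genuine error. The pointwise tail bound $\s b(x)\lesssim\|b\|_{1}\,\dist(x,Q)^{-d}$ for a single mean-zero atom $b$ supported on $Q$ and $x\notin 2Q$ is correct, but it does \emph{not} integrate to $O(\|b\|_{1})$: the integral $\int_{(2Q)^{c}}\dist(x,Q)^{-d}\,dx$ diverges logarithmically (in dimension $d$ the decay $r^{-d}$ against surface measure $r^{d-1}\,dr$ gives $\int r^{-1}\,dr$). This is precisely the obstruction that forces the H\"ormander cancellation condition, not merely the size bound $|K(x)|\lesssim|x|^{-d}$, in the classical Calder\'on--Zygmund theory. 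Bounding $\s(\sum_{j}b_{j})\leq\sum_{j}\s b_{j}$ via sublinearity and then integrating therefore cannot close. The paper (Proposition~5.1) instead controls $\int_{\tilde E^{c}}\tilde R(\sum_{j}b_{j})^{r}\,w$ for some $r>1$: the reverse H\"older inequality of Corollary~2.3 processes the full sum $\sum_{j}b_{j}$ \emph{inside} the $r$-th power before any pointwise estimate, producing a factor $2^{k((\alpha-d)r+d)}$ whose sum over scales $k$ converges for $\alpha\in((d-1)/r',\,d/r')$. This interval is nonempty exactly when $r>1$, which is why the $L^{1}$ tail argument fails.
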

For an $\ell^{\infty}\to\BMO$ estimate in the discrete setting see also \cite[Section 3]{2013arXiv1309.2336K}.
The above estimate for the square function together with weighted bounds for the dyadic martingale square function and Haar multipliers will be used to show weighted and vector-valued bounds for the $r$-variation of ergodic averages.
\begin{theorem}
\label{thm:var}
Let $r>2$ and $1<q<\infty$.
The operator
\[
f_{i}(x) \mapsto (A_{t}f_{i})(x)
\]
acting on sequence-valued functions on $\R^{d}$ is bounded as an operator
\begin{align}
L^{1}_{x}(\ell^{q}_{i},w) &\to L^{1,\infty}_{x}(\ell^{q}_{i}(V^{r}_{t}), w), \quad\text{for any } w\in A_{1}(\R^{d}) \label{eq:thm:1}\\
L^{\infty}_{x}(\ell^{q}_{i}) &\to \BMO_{x}(\ell^{q}_{i}(V^{r}_{t})), \quad\text{and} \label{eq:thm:bmo}\\
L^{p}_{x}(\ell^{q}_{i},w) &\to L^{p}_{x}(\ell^{q}_{i}(V^{r}_{t}),w) \quad\text{for any } 1<p<\infty \text{ and } w\in A_{p}(\R^{d}). \label{eq:thm:p}
\end{align}
In each case the operator norm is bounded in terms of the $A_{p}$ constant of the weight and grows as $\frac{r}{r-2}$ for $r\to 2$.
\end{theorem}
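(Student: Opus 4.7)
The plan is to first establish the scalar weighted estimate \eqref{eq:thm:p} for a suitable range of $(p,w)$, and then deduce the vector-valued version of \eqref{eq:thm:p} together with the endpoints \eqref{eq:thm:1} and \eqref{eq:thm:bmo} by extrapolation (see \cite[Theorem 3.1]{MR2754896}) and standard Calderón--Zygmund arguments. The starting point is the long/short variation decomposition
\[
\hV{A_t f}{t>0} \lesssim \hV{A_{2^k} f}{k\in\Z} + \Big(\sum_{k\in\Z} S_k f^2\Big)^{1/2},
\]
valid for $r\geq 2$ via the embedding $\ell^2\hookrightarrow\ell^r$ applied to the short-variation norms, together with the pointwise bound $\hV[2]{A_t f}{t\in[2^k,2^{k+1}]}\leq S_k f$. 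The second term is dominated by $\tilde S f$, which Theorem \ref{thm:square} controls in $L^p(w)$ for every $w\in A_p$, and vector-valued extrapolation upgrades this to $L^p_x(\ell^q_i,w)$.

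For the long variation I would split $A_{2^k}=E_k+(A_{2^k}-E_k)$. The dyadic martingale variation $\hV{E_k f}{k}$ is handled by the weighted vector-valued Lépingle inequality, which reduces to the dyadic martingale square function together with a jump inequality and accounts for the $\tfrac{r}{r-2}$ growth stated in the theorem. For the remaining term $\hV{A_{2^k}f-E_k f}{k}$ I would decompose
\[
A_{2^k}-E_k = \sum_{l\geq 0} T_{k,l},
\]
where $T_{k,l}$ acts on the band of martingale differences at dyadic scale $k\pm l$ and has operator norm geometrically decaying in $l$, the decay coming from the smoothness of the ball-averaging kernel against the mean-zero structure of the martingale differences. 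Each $T_{\cdot,l}$ is a Haar-multiplier-type operator, so the weighted Haar multiplier bounds combined with a jump inequality applied bandwise give the required estimate with the $\tfrac{r}{r-2}$ factor.

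The main obstacle will be arranging the decomposition of $A_{2^k}-E_k$ so that the Haar-type summands $T_{k,l}$ fit the weighted Haar multiplier framework and simultaneously exhibit summable decay in $l$, since the averages $A_{2^k}$ do not respect the dyadic filtration and the cancellation must come from quantitative control of the oscillation of the ball kernel against the dyadic Haar blocks. Once this reduction is accomplished in a single scalar weighted setting, Rubio de Francia extrapolation produces the $\ell^q$-valued version of \eqref{eq:thm:p} for all $1<p<\infty$ and all $w\in A_p$, and the endpoints \eqref{eq:thm:1}, \eqref{eq:thm:bmo} then follow from a Calderón--Zygmund decomposition (for the weak $(1,1)$ inequality) and from duality together with the John--Nirenberg inequality (for the $L^\infty\to\BMO$ inequality), applied piecewise to the kernels appearing in the decomposition.
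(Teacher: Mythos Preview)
Your outline for \eqref{eq:thm:p} is essentially correct and close to the paper's, but you misidentify the obstacle. The term $\hV{A_{2^k}f-E_kf}{k}$ needs no separate $T_{k,l}$ decomposition: since $S_kf$ is the \emph{inhomogeneous} $2$-variation of $A_tf-E_kf$ on $[2^k,2^{k+1}]$, one has $|A_{2^k}f-E_kf|\leq S_kf$ pointwise, whence for $r\geq 2$
\[
\hV{A_{2^k}f-E_kf}{k}\leq 2\Big(\sum_k|A_{2^k}f-E_kf|^r\Big)^{1/r}\leq 2\Big(\sum_k(S_kf)^2\Big)^{1/2}\leq 2\,\s f,
\]
already controlled by Theorem~\ref{thm:square}. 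The paper in fact bypasses the long/short split at the variation level and proves a weighted \emph{jump} inequality for $A_tf$ (Proposition~\ref{prop:jump}): the square function reduces this to the jump inequality for $E_kf$, which is obtained via greedy selection and the Rademacher/Haar-multiplier linearization you describe; the $r$-variation bound and the $\tfrac{r}{r-2}$ growth then follow from the interpolation argument of \cite[Section~2]{MR2434308}. Extrapolation yields the vector-valued form exactly as you say.

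The genuine gap is at the endpoints. For \eqref{eq:thm:bmo}, duality together with John--Nirenberg does not apply: the map $f\mapsto\V{A_tf}{t}$ is sublinear, not linear, and the target $\BMO_x(\ell^q_i(V^r_t))$ is a Banach-space-valued $\BMO$ with no useful predual here. The paper proves \eqref{eq:thm:bmo} directly (Section~6): for a fixed cube $Q$ one splits $f=f1_{3Q}+f1_{(3Q)^c}$, handles the local part by the scalar $L^q$ bound, and for the global part invokes the Sobolev-type embedding $\hV{a}{}\lesssim\|a\|_{L^r}^{1-1/r}\|a'\|_{L^r}^{1/r}$ (Lemma~\ref{lem:besov-vr}) together with the kernel smoothness $|B(x,t)\Delta B(y,t)|\lesssim|x-y|\,t^{d-1}$. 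For \eqref{eq:thm:1} your Calder\'on--Zygmund outline is right, but the decisive input is not a standard kernel condition: it is the $\ell^r$ reverse H\"older inequality of Corollary~\ref{cor:reverse-holder-single-scale}, which bounds $\r_k(\sum_Q b^Q)^r$ by $2^{k(\alpha-d)r}\sum_{Q}\diam(Q)^{-\alpha r}\|b^Q\|_1^r$ and is what makes the bad-part estimate in Proposition~\ref{prop:weak11} go through.
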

Theorem~\ref{thm:var} is a joint generalization of the Fefferman--Stein vector-valued maximal inequality \cite{MR0284802} and the Bourgain--Lépingle inequality \cite{MR1019960}.
Since bounds for a fixed $t$ are immediate, it suffices to treat the homogeneous variation.

Our proof also gives analogous results for dilates of other neighborhoods of the origin provided that the boundary is sufficiently smooth, this applies e.g.\ to cubes.
Throughout the proof we assume $r<\infty$, which is justified by the fact that the conclusion of the theorem becomes stronger as $r$ decreases.
We also assume that the coordinate functions $f_{i}$ are smooth on $\R^{d}$, the general case follows by standard truncation arguments.

The $p$-maximal function and the sharp $p$-maximal function are denoted by
\begin{equation}
\label{eq:max-fct}
M_{p}f(x) = \sup_{Q\ni x} \big( |Q|^{-1} \int_{Q} |f|^{p} \big)^{1/p},
\quad
M^{\sharp}_{p}f(x) = \sup_{Q\ni x} \inf_{c} \big( |Q|^{-1} \int_{Q} |f-c|^{p} \big)^{1/p}.
\end{equation}
The subscript $p$ is omitted if $p=1$.

The authors thank the Hausdorff Research Institute for Mathematics for hospitality during the Trimester Program ``Harmonic Analysis and Partial Differential Equations''.

\section{A reverse Hölder inequality}
For $1<r<\infty$ let
\[
R_{k}b(x) := \V{A_{t}b}{t\in [2^{k},2^{k+1}]},
\quad
\r_{k}b(x) = \sup_{y\in 3Q_{k}(x)} R_{k}b(y),
\quad
\r (b) := (\sum_{k} \r_{k}(b)^{r})^{1/r}.
\]
Denote the side length of a cube $Q$ by $\diam(Q)$.
\begin{lemma}[{cf.\ \cite[Lemma 4.2]{MR1996394}}]
\label{lem:reverse-holder-single-scale}
Let $1<r<\infty$.
Let $\mathcal{Q}$ be a collection of disjoint cubes $Q$ of size $\lesssim 2^{k}$.
For each $Q$ let $b^{Q}$ be a scalar-valued function supported on $Q$ with $\int b^{Q} = 0$.
Then for every $\alpha> \frac{d-1}{r'}$ we have
\begin{equation}
\label{eq:V-bi-biQ-single-scale}
R_{k}(\sum_{Q} b^{Q}(x))^{r}
\lesssim_{\alpha,r,d}
(2^{k})^{\alpha r} \sum_{Q} \diam(Q)^{-\alpha r} R_{k}(b^{Q}(x))^{r}.
\end{equation}
\end{lemma}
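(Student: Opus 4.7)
The plan is to exploit the mean-zero condition on each $b^Q$ together with a simple sphere-counting estimate. Fix $x \in \R^d$ and set $\phi_Q(t) := A_t b^Q(x)$. Because $\int b^Q = 0$ and $b^Q$ is supported in $Q$, the function $\phi_Q$ vanishes identically outside the ``active'' interval
\[
I_Q := \{t : \partial B(x,t) \cap Q \neq \emptyset\},
\]
which has length $\lesssim \diam(Q)$. From here the strategy is to group the cubes in $\mathcal{Q}$ by scale, bound the contribution at each scale by a count of cubes cut by a single sphere, and then assemble the scales via an $\ell^r$ H\"older inequality with a small geometric weight.

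For the per-scale step I would let $\mathcal{Q}_m := \{Q \in \mathcal{Q} : \diam(Q) \in [2^m, 2^{m+1})\}$ (so $m \leq k + O(1)$ by hypothesis). Fix any partition $2^k = t_0 < \cdots < t_J = 2^{k+1}$ and set $a_{Q,j} := \phi_Q(t_j) - \phi_Q(t_{j-1})$. Since $\phi_Q \equiv 0$ off $I_Q$, the difference $a_{Q,j}$ can be nonzero only when at least one of $t_{j-1}, t_j$ lies in $I_Q$. The geometric input is that a single sphere $\partial B(x,t)$ of radius $\sim 2^k$ meets at most $\lesssim (2^k/2^m)^{d-1}$ disjoint cubes of side $\sim 2^m$: each such cube receives a sphere piece of $(d-1)$-measure $\lesssim (2^m)^{d-1}$, while the total surface area is $\sim (2^k)^{d-1}$. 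Hence the count $N_{m,j}$ of $Q \in \mathcal{Q}_m$ with $a_{Q,j} \neq 0$ satisfies $N_{m,j} \lesssim (2^k/2^m)^{d-1}$ uniformly in the partition. H\"older in $Q$ and summation in $j$ then give
\[
R_k\Big(\sum_{Q \in \mathcal{Q}_m} b^Q\Big)(x)^r \lesssim (2^k/2^m)^{(d-1)(r-1)} \sum_{Q \in \mathcal{Q}_m} R_k(b^Q)(x)^r,
\]
with the $\sup_t$ component of the inhomogeneous $R_k$ norm handled by the same count applied at a single time.

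To combine the scales I apply H\"older in $m$ to the triangle inequality $R_k(\sum_Q b^Q)(x) \leq \sum_m R_k(\sum_{Q \in \mathcal{Q}_m} b^Q)(x)$, using weight $\eta_m := (2^m/2^k)^\gamma$ with $\gamma := \alpha - (d-1)/r' > 0$. The prefactor $(\sum_m \eta_m^{r'})^{r/r'}$ is a convergent geometric series, and the power of $(2^k/2^m)$ in the remaining factor becomes $\gamma r + (d-1)(r-1) = \alpha r$, yielding \eqref{eq:V-bi-biQ-single-scale}. The main obstacle — and what dictates the threshold $\alpha > (d-1)/r'$ — is the uniformity of the count $N_{m,j}$. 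A naive count of cubes intersecting the full annulus $B(x,t_j) \setminus B(x,t_{j-1})$ picks up an extra factor $(t_j - t_{j-1})/2^m$, which, when summed over small cubes, closes only under the strictly stronger condition $\alpha r' > d$. The observation that $\phi_Q$ vanishes outside $I_Q$ reduces the count to a single time slice and recovers the sharp exponent $(d-1)/r'$.
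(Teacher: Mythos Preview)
Your argument is correct and essentially the same as the paper's: both use the mean-zero of $b^Q$ to see that $(A_{t_{j+1}}-A_{t_j})b^Q(x)$ is nonzero only when $Q$ meets one of the spheres $\partial B(x,t_j)$ or $\partial B(x,t_{j+1})$, then invoke the count $\lesssim (2^k/2^m)^{d-1}$ for disjoint cubes of scale $2^m$ meeting a single sphere (the paper phrases this via the shell-volume bound rather than surface area), and finish with H\"older carrying a geometric weight in the scale index. The only cosmetic difference is that the paper applies H\"older jointly in the scale index and in $Q$ inside each $j$, whereas you do H\"older in $Q$ within a fixed scale first and then H\"older in $m$; the exponents match exactly and the threshold $\alpha>(d-1)/r'$ arises identically.
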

\begin{proof}
We consider only the homogeneous variation, in order to get the inhomogeneous variation it suffices to additionally consider an arbitrary (but fixed) $t$, which is similar but easier.
Fix an arbitrary sequence $2^{k}\leq t_{1}<\dots<t_{J} \leq 2^{k+1}$ and split the sum over cubes in
\[
\sum_{j} |\sum_{Q} (A_{t_{j+1}}-A_{t_{j}})b^{Q}(x)|^{r}.
\]
according to the size of the cubes
\[
\sum_{j} |\sum_{i\leq 0} \sum_{Q : \diam(Q) \approx 2^{k+i}} (A_{t_{j+1}}-A_{t_{j}})b^{Q}(x)|^{r}.
\]
Let $\alpha$ be chosen later and apply Hölder's inequality in the sum over $i$ to estimate this by
\begin{multline}
\label{eq:V-bi-biQ:CS-single-scale}
\sum_{j}
\big( \sum_{i\leq 0} \sum_{Q : \diam(Q) \approx 2^{k+i}} |2^{-\alpha i}(A_{t_{j+1}}-A_{t_{j}})b^{Q}(x)|^{r} \big)\\
\cdot \big(\sum_{i\leq 0} \sum_{Q : \diam(Q) \approx 2^{k+i}} |2^{\alpha i} 1_{Q:Q\cap (\partial B(x,t))\neq\emptyset, t=t_{j} \text{ or }t_{j+1}}|^{r'} \big)^{r/r'}.
\end{multline}
In order to estimate the second bracket note that $Q\cap (\partial B(x,t))\neq\emptyset$ implies $Q$ is contained in a spherical shell of radius $\approx 2^{k}$ and thickness $\approx 2^{k+i}$.
For a fixed $i$ there can be at most $O(2^{-i(d-1)})$ cubes of this kind.
Hence the second bracket in \eqref{eq:V-bi-biQ:CS-single-scale} is bounded by
\[
\big(\sum_{i\leq 0} 2^{\alpha i r'} 2^{-i(d-1)} \big)^{r/r'},
\]
which is finite by the assumption on $\alpha$.
This gives for \eqref{eq:V-bi-biQ:CS-single-scale} the bound
\begin{multline*}
\sum_{j}
\big( \sum_{i\leq 0} \sum_{Q : \diam(Q) \approx 2^{k+i}} |2^{-\alpha i}(A_{t_{j+1}}-A_{t_{j}})b^{Q}(x)|^{r} \big)\\
\lesssim
\sum_{j} \sum_{Q} \diam(Q)^{-\alpha r} (2^{k})^{\alpha r} |(A_{t_{j+1}}-A_{t_{j}})b^{Q}(x)|^{r}
\end{multline*}
Taking the supremum over sequences $(t_{j})_{j}$ we obtain \eqref{eq:V-bi-biQ-single-scale}.
\end{proof}
Estimating the $r$-variation norm by the $1$-variation norm and noting that only cubes with $\dist(x,Q) \lesssim 2^{k}$ contribute to the sum we obtain
\begin{corollary}
\label{cor:reverse-holder-single-scale}
In the situation of Lemma~\ref{lem:reverse-holder-single-scale} we have
\[
\r_{k}(\sum_{Q} b^{Q}(x))^{r}
\lesssim_{\alpha,r,d}
(2^{k})^{\alpha r-d r} \sum_{Q : \dist(x,Q) \lesssim 2^{k}} \diam(Q)^{-\alpha r} \|b^{Q}\|_{1}^{r}.
\]
\end{corollary}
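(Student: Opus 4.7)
The plan is to combine Lemma~\ref{lem:reverse-holder-single-scale} with two routine ingredients: the pointwise bound $V^{r}\le V^{1}$ on the variation seminorm (valid for $r\ge 1$), and a geometric localization stemming from the compact support of each $b^{Q}$. The hint already states as much; what remains is the bookkeeping.

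First I would apply Lemma~\ref{lem:reverse-holder-single-scale} pointwise at each $y\in 3\mathcal{Q}_{k}(x)$, obtaining
\[
R_{k}(\textstyle\sum_{Q} b^{Q})(y)^{r} \lesssim (2^{k})^{\alpha r} \sum_{Q} \diam(Q)^{-\alpha r} R_{k}(b^{Q})(y)^{r}.
\]
Since the right-hand side is a nonnegative sum and $u\mapsto u^{r}$ is monotone, taking the supremum over $y\in 3\mathcal{Q}_{k}(x)$ gives the analogous inequality with $\tilde{R}_{k}$ in place of $R_{k}$ on both sides.

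Next I would estimate $\tilde{R}_{k}(b^{Q})$ for a single cube. For $r\ge 1$ the $r$-variation is dominated by the $1$-variation, which for the smooth map $t\mapsto A_{t}b^{Q}(y)$ is just $\int_{2^{k}}^{2^{k+1}} |\partial_{t} A_{t}b^{Q}(y)|\,\dif t$. Differentiating $A_{t}f(y) = c_{d}^{-1} t^{-d}\int_{B(y,t)} f$ produces a volume term bounded by $t^{-d-1}\|b^{Q}\|_{1}$ and a boundary term whose time integral collapses onto the annulus $\{2^{k}\le |z|\le 2^{k+1}\}$; both yield $\lesssim 2^{-kd}\|b^{Q}\|_{1}$ after integration over $[2^{k},2^{k+1}]$. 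Hence $\tilde{R}_{k}(b^{Q})(x)\lesssim 2^{-kd}\|b^{Q}\|_{1}$.

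Finally, the geometric localization: $A_{t}b^{Q}(y)$ can be nonzero only when $B(y,t)$ meets $Q$, and for $y\in 3\mathcal{Q}_{k}(x)$, $t\in[2^{k},2^{k+1}]$ this forces $\dist(x,Q)\lesssim 2^{k}$. Inserting the single-cube bound into the preceding display and dropping the vanishing terms yields the stated inequality. There is no genuine obstacle here; the one point of care is that the $V^{r}\le V^{1}$ estimate and the support localization should be applied to each $b^{Q}$ separately, after the supremum has already been distributed inside the sum.
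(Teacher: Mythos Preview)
Your proposal is correct and follows exactly the route the paper indicates in its one-sentence proof: apply Lemma~\ref{lem:reverse-holder-single-scale}, dominate each single-cube term $R_{k}(b^{Q})$ by its $1$-variation (which is $\lesssim 2^{-kd}\|b^{Q}\|_{1}$), and drop the cubes that are too far from $x$ to contribute. The only minor omission is that the inhomogeneous norm $R_{k}$ also contains a $\sup_{t}|A_{t}b^{Q}(y)|$ term, but this is trivially $\lesssim 2^{-kd}\|b^{Q}\|_{1}$ as well.
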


\section{Strong type bounds for the square function}
Recall that a Haar function on a dyadic cube is a function that is constant on each dyadic subcube and has integral zero.
A Haar function is $L^{\infty}$ normalized if it is bounded by $1$.
\begin{lemma}
\label{lem:short-variation-single-scale}
Let $k\in\Z$ and $j<0$.
Let $h_{Q}$ be $L^{\infty}$ normalized Haar functions supported on the cubes $Q\in\mathcal{Q}_{k+j+1}$.
Let $1<p<\infty$ and $w\in A_{p}$.
Then
\begin{equation}
\label{eq:short-variation-single-scale:claim}
\|\s_{k}(\sum_{Q\in\mathcal{Q}_{k+j+1}} h_{Q} E_{k+j+1} f)\|_{L^{p}(w)}
\lesssim
2^{-\epsilon |j|} \|f\|_{L^{p}(w)},
\end{equation}
where $\epsilon>0$ and the implied constant depend only on $p$ and $[w]_{A_{p}}$, but not on $k,j$, or the Haar functions.
\end{lemma}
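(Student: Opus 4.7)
The plan is to reduce the weighted estimate to a pointwise inequality obtained from Corollary~\ref{cor:reverse-holder-single-scale}. Decompose $g = \sum_{Q \in \mathcal{Q}_{k+j+1}} b^Q$ with $b^Q := h_Q \cdot (E_{k+j+1} f)|_Q$; each $b^Q$ is supported on $Q$, has $\int b^Q = 0$, and satisfies $\|b^Q\|_1 \leq |c_Q|\,|Q|$ with $c_Q := (E_{k+j+1} f)|_Q$. Because $j < 0$, every such $Q$ sits inside a single cube of $\mathcal{Q}_k$, so combined with $\int_Q h_Q = 0$ this gives $E_k g \equiv 0$. Using the monotonicity $V^2 \leq V^r$ for $r \leq 2$, we therefore have $\s_k g \leq \r_k g$ for every $r \in (1, 2]$, and the freedom in choosing $r$ will be crucial.

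With such an $r$ and any $\alpha \in ((d-1)/r', d/r')$ (a non-empty interval), Corollary~\ref{cor:reverse-holder-single-scale} bounds $\r_k g(x)^r$ by a weighted sum over the cubes $Q$ with $\dist(x, Q) \lesssim 2^k$. I substitute $\|b^Q\|_1 \leq |c_Q|\,|Q|$, apply Jensen to replace $|c_Q|^r$ by $(\fint_Q |f|^s)^{r/s}$ for a parameter $s \geq r$, and apply Hölder in the sum over the $\sim 2^{-d(j+1)}$ relevant cubes to pull out the expression $\big(\int_{B(x, C 2^k)} |f|^s\big)^{r/s} \leq |B|^{r/s} (M_s f(x))^r$. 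A direct bookkeeping of the powers of $2$ collapses the total exponent to $r(j+1)(d/r' - \alpha)$; since $\alpha < d/r'$ and $j+1 \leq 0$, taking $r$-th roots produces the pointwise bound
\[
\s_k g(x) \lesssim 2^{-\epsilon_0 |j|}\, M_s f(x), \qquad \epsilon_0 := d/r' - \alpha > 0.
\]

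To conclude I invoke the openness of $A_p$: for $w \in A_p$ there is $\eta = \eta([w]_{A_p}) > 0$ with $w \in A_{p - \eta}$. Setting $r = s := p/(p - \eta/2)$ places $s$ in $(1, 2]$ and gives $p/s = p - \eta/2 > p - \eta$, so $w \in A_{p/s}$, and hence $M_s : L^p(w) \to L^p(w)$ with operator norm depending only on $[w]_{A_p}$. Combined with the pointwise bound above this yields the claim.

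The essential subtlety is that one cannot simply take $r = 2$ in the reverse Hölder inequality: this would only yield pointwise control by $M_2 f$, which fails on $L^p(w)$ when $p \leq 2$. Trading a little by way of $V^2 \leq V^r$ for $r$ close to $1$ makes the exponent of the maximal function arbitrarily close to $1$, at the price of a smaller but still positive decay rate $\epsilon_0 \sim 1/r'$; this flexibility is exactly what openness of $A_p$ needs to close the estimate uniformly over every $1 < p < \infty$ and every $A_p$ weight, and in every dimension.
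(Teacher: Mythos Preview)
Your argument is correct and follows the same route as the paper: reduce to $\r_{k}$ via $E_{k}g=0$ and $V^{2}\le V^{r}$ for $r\le 2$, apply Corollary~\ref{cor:reverse-holder-single-scale}, and then use the openness of $A_{p}$ to choose $r$ close to $1$ so that $M_{r}$ is bounded on $L^{p}(w)$. The only cosmetic differences are that the paper sets $s=r$ from the outset (your extra parameter $s$ is never actually needed since you take $s=r$ at the end) and bounds by $M_{r}(E_{k+j+1}f)$ rather than $M_{r}f$, which amounts to the same thing after one application of Jensen.
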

\begin{proof}
By homogeneity we may assume $k=0$.
Let $w\in A_{p}$, then there exists $r\in (1,\min(2,p))$ such that $w\in A_{p/r}$.
Write $f_{Q}$ for the value of $E_{j+1}f$ on $Q$ and observe $E_{0} \sum_{Q\in\mathcal{Q}_{j+1}} h_{Q} f_{Q}=0$.
Hence we have
\[
\|\s_{0}(\sum_{Q\in\mathcal{Q}_{j+1}} h_{Q} E_{j+1} f)\|_{L^{p}(w)}
\lesssim
\|\r_{0}(\sum_{Q\in\mathcal{Q}_{j+1}} h_{Q} f_{Q})\|_{L^{p}(w)},
\]
where $\r_{0}$ is defined using the above value of $r$.
Applying Corollary~\ref{cor:reverse-holder-single-scale} we obtain
\[
\r_{0}(\sum_{Q\in\mathcal{Q}_{j+1}} h_{Q} f_{Q})^{r}(x)
\lesssim
2^{-r\alpha j} \sum_{Q\in \mathcal{Q}_{j+1}, \dist(x,Q)\lesssim 1} \|h_{Q}f_{Q}\|_{1}^{r}.
\]
By H\"older's inequality this is bounded by
\begin{align*}
2^{-r\alpha j} \sum_{Q\in \mathcal{Q}_{j+1}, \dist(x,Q)\lesssim 1} \|1_{Q}f_{Q}\|_{r}^{r} \|h_{Q}\|_{r'}^{r}
&\leq
2^{-r\alpha j} 2^{jd r/r'}\int_{B(x,C)} |E_{j+1}f|^{r}\\
&\lesssim
2^{r(d/r'-\alpha) j} M_{r}(E_{j+1}f)^{r}(x).
\end{align*}
Thus we have established
\[
\r_{0}(\sum_{Q\in\mathcal{Q}_{j+1}} h_{Q} f_{Q})(x)
\lesssim
2^{\epsilon j} M_{r}(E_{j+1}f)(x)
\]
for any $\epsilon<1/r'$.
We are done since the maximal function $M_{r}$ is bounded on $L^{p}(w)$.
\end{proof}

\begin{proof}[Proof of Theorem~\ref{thm:square}, part~\ref{thm:square:Ap}]
By Rubio de Francia's extrapolation theorem it suffices to consider $p=2$.
Write $f=\sum_{j} d_{j}$, $d_{j}=E_{j}f-E_{j+1}f$.
The result will follow if we can show
\[
\|\s_{k}(d_{k+j})\|_{L^{2}(w)} \lesssim 2^{-\epsilon |j|} \|d_{k+j}\|_{L^{2}(w)}.
\]
Indeed,
\[
\s f
=
(\sum_{k} (\s_{k}(\sum_{j}d_{k+j}))^{2})^{1/2}
\leq
\sum_{j} (\sum_{k} (\s_{k}(d_{k+j}))^{2})^{1/2},
\]
and taking $L^{2}(w)$ norms on both sides we obtain
\[
\| \s f \|_{L^{2}(w)}
\leq
\sum_{j} (\sum_{k} \|\s_{k}(d_{k+j}) \|_{L^{2}(w)}^{2})^{1/2}
\lesssim
\sum_{j} 2^{-\epsilon |j|} (\sum_{k} \|d_{k+j}\|_{L^{2}(w)}^{2})^{1/2}.
\]
The sum over $k$ on the right-hand side is the dyadic martingale square function which is bounded on $L^{2}(w)$, see \cite[Theorem 3.6]{MR1214060} or \cite{MR2657437}.

For $j\geq -10^{d}$ (say) we note that $S_k$ is invariant under constant addition to use the estimate
\[
S_{k}f(x)
\lesssim
\inf_{c} 2^{-kd} \int_{B_{C2^{k}}(x)} |f-c|.
\]
It follows that an estimate of the same kind holds for $\s_{k}$.
Writing $d_{k+j}=\sum_{Q\in\mathcal{Q}_{k+j}} a_{Q} 1_{Q}$ we obtain
\[
\s_{k}d_{k+j} \lesssim \sum_{Q\in\mathcal{Q}_{k+j}} |a_{Q}| 1_{\partial Q + B_{C 2^{k}}}.
\]
The overlap of the characteristic functions on the right-hand side is bounded by an absolute constant, so
\[
(\s_{k}d_{k+j})^{2} \lesssim \sum_{Q\in\mathcal{Q}_{k+j}} |a_{Q}|^{2} 1_{\partial Q + B_{C 2^{k}}}.
\]
Integrating this we obtain
\[
\| \s_{k}d_{k+j} \|_{L^{2}(w)}^{2}
\lesssim
\sum_{Q\in\mathcal{Q}_{k+j}} |a_{Q}|^{2} w(\partial Q + B_{C 2^{k}})
\]
Since the $A_{2}$ weight $w$ also satisfies the $A_{\infty}$ condition, this is bounded by
\[
\sum_{Q\in\mathcal{Q}_{k+j}} |a_{Q}|^{2} w(Q + Q_{C 2^{k}}) (|\partial Q + Q_{C 2^{k}}| / |Q + Q_{C 2^{k}}|)^{\delta}
\]
for some $\delta>0$, and since $w$ is doubling this is bounded by
\[
\sum_{Q\in\mathcal{Q}_{k+j}} |a_{Q}|^{2} w(Q) (2^{-j})^{\delta}
=
2^{-j\delta} \| d_{k+j} \|_{L^{2}(w)}^{2}.
\]

For $j<-10^{d}$ write
\[
d_{k+j} = \sum_{Q\in\mathcal{Q}_{k+j+1}} h_{Q} E_{k+j+1}f,
\]
where $h_{Q} = d_{k+j}1_{Q}/\|d_{k+j}1_{Q}\|_{\infty}$ are normalized Haar functions (with convention $h_{Q}=0$ if $d_{k+j}1_{Q}=0$)
and $f=\sum_{Q\in\mathcal{Q}_{k+j+1}} 1_{Q} \|d_{k+j}1_{Q}\|_{\infty}$.
Note $\|f\|_{L^{2}(w)} \lesssim \|d_{k+j}\|_{L^{2}(w)}$ since $w$ is doubling and $d_{k+j}$ is constant at scale $2^{k+j}$.
The claim now follows from Lemma~\ref{lem:short-variation-single-scale}.
\end{proof}

\section{The jump inequality}
Let $(a_{t})_{t}$ be an arbitrary function.
The \emph{jump counting function} $N_{\lambda}(a_{t})$ is the supremum over all $J$ such that there exist $t_{0}<t_{1}<\dots<t_{J}$ with $|a_{t_{j}}-a_{t_{j-1}}|>\lambda$ for all $j=1,\dots,J$.
See \cite{MR2434308} for the basic properties of the jump counting function and its relation to the variational norms.
\begin{proposition}
\label{prop:jump}
Let $w\in A_{p}(\R^{d})$, $1<p<\infty$.
Then
\[
\sup_{\lambda>0} \lambda^{-1} \| \sqrt{N_{\lambda}(A_{t}f)} \|_{L^{p}(w)} \lesssim \| f \|_{L^{p}(w)}.
\]
\end{proposition}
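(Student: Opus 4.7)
The plan is to reduce the inequality to two ingredients: Theorem~\ref{thm:square} for a short-variation remainder, and a weighted jump inequality for the dyadic martingale $(E_k f)_{k\in\Z}$. For a sequence $t_0<\dots<t_J$ witnessing $N_\lambda(A_t f)$ at $x$, let $k_j\in\Z$ satisfy $2^{k_j}\leq t_j<2^{k_j+1}$ and telescope each increment through the conditional expectations:
\[
A_{t_j}f - A_{t_{j-1}}f = (A_{t_j}f - E_{k_j}f) + (E_{k_j}f - E_{k_{j-1}}f) - (A_{t_{j-1}}f - E_{k_{j-1}}f).
\]
If the left side exceeds $\lambda$, some summand exceeds $\lambda/3$. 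Using $|A_tf-E_kf|\leq S_kf$ for $t\in[2^k,2^{k+1}]$, the outer alternatives occur only at scales $k$ with $S_k f>\lambda/3$ (and each such scale witnesses $O(1)$ inter-scale jumps, since $(k_j)$ is nondecreasing); the middle alternative contributes a jump to $N_{\lambda/3}(E_k f)$; and intra-scale jumps at scale $k$ are counted by Chebyshev as $\leq \lambda^{-2} S_k f^2$. Combining,
\[
\lambda\sqrt{N_\lambda(A_t f)(x)} \lesssim \lambda\sqrt{N_{\lambda/3}(E_k f : k\in\Z)(x)} + \Big(\sum_k S_k f(x)^2\Big)^{1/2}.
\]

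The second summand is pointwise $\leq \s f(x)$, whose $L^p(w)$ norm is bounded by $\|f\|_{L^p(w)}$ via Theorem~\ref{thm:square}, uniformly in $\lambda$. It then remains to prove the weighted jump inequality for the dyadic martingale, i.e.\
\[
\sup_\lambda \|\lambda\sqrt{N_\lambda(E_k f)}\|_{L^p(w)} \lesssim \|f\|_{L^p(w)}, \quad w\in A_p.
\]
By Rubio de Francia extrapolation, applied to the quasi-sublinear operator $f\mapsto \sup_\lambda \lambda\sqrt{N_\lambda(E_k f)}$, it suffices to handle $p=2$, $w\in A_2$. For each $\lambda$ define stopping times $\tau_0=-\infty$ and $\tau_{i+1}=\inf\{k>\tau_i:|E_k f - E_{\tau_i}f|>\lambda\}$; then $\lambda^2 N_\lambda(E_k f)\leq \sum_i(E_{\tau_i}f - E_{\tau_{i-1}}f)^2$, the square of the martingale square function of the stopped martingale $M'_i:=E_{\tau_i}f$. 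The classical weighted Burkholder--Gundy inequality on $A_2$ bounds its $L^2(w)$ norm by $\|f\|_{L^2(w)}$, completing the reduction.

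The main obstacle is that $\lambda\sqrt{N_\lambda}$ is \emph{not} pointwise dominated by the martingale square function at the $r=2$ endpoint, so a genuine stopping-time (Lépingle/Pisier--Xu) argument is required rather than a cheap pointwise estimate; a minor technical point is that the two-sided dyadic martingale on $\R^d$ needs its stopping arguments run in the fine-to-coarse direction. All other pieces---the pointwise dyadic decomposition, Rubio de Francia extrapolation, and Theorem~\ref{thm:square}---are classical or already established.
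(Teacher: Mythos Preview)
Your argument is correct and follows the same route as the paper: reduce the jump inequality for $(A_t f)_t$ to that for the dyadic martingale $(E_k f)_k$ via the square function $\s$ (the paper cites the pointwise comparison you spell out as \cite[Lemma~1.3]{MR2434308}), then control $N_\lambda(E_k f)$ by the stopped-martingale square function through greedy stopping times \cite[Lemma~6.7]{MR1645330}. The only difference is in how the final step is packaged: the paper bounds $\|(\sum_j|E_{t_{j+1}}f-E_{t_j}f|^2)^{1/2}\|_{L^p(w)}$ directly at general $p$ by Rademacher linearization and weighted Haar-multiplier bounds \cite{MR1748283,MR2657437} (using that the sign $r_{j(k,x)}$ is $\mathcal{F}_k$-measurable since the $t_j$ are stopping times), whereas you first extrapolate to $p=2$ and then invoke weighted Burkholder--Gundy for the stopped filtration---essentially the same content, since the atoms of $\mathcal{F}_{\tau_i}$ are dyadic cubes and so inherit the dyadic $A_2$ constant. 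One small caveat: extrapolation should be applied at each fixed $\lambda$ (to the pairs $(f,\lambda\sqrt{N_\lambda})$) rather than to the pointwise operator $\sup_\lambda \lambda\sqrt{N_\lambda}$, which need not be finite.
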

Proposition~\ref{prop:jump} implies the scalar-valued case of \eqref{eq:thm:p} in Theorem~\ref{thm:var} by the interpolation argument in \cite[Section 2]{MR2434308} (note that an $A_{p}$ weight, $1<p<\infty$, is also in $A_{q}$ for all $q$ in a neighborhood of $p$).
Extrapolation then yields the vector-valued case of \eqref{eq:thm:p}.
\begin{proof}
In view of the weighted bound for the square function (part~\ref{thm:square:Ap} of Theorem~\ref{thm:square}) it suffices to show
\[
\sup_{\lambda>0} \lambda^{-1} \| \sqrt{N_{\lambda}(E_{t}f)} \|_{L^{p}(w)} \lesssim \| f \|_{L^{p}(w)},
\]
where $t$ takes dyadic values (here we use the convention that $E_{k}$ is the conditional expectation at scale $2^{-k}$), see e.g.\ \cite[Lemma 1.3]{MR2434308}.
By the greedy selection argument, see e.g.\ \cite[Lemma 6.7]{MR1645330}, this follows from
\[
\| (\sum_{j}|E_{t_{j+1}(x)}f(x)-E_{t_{j}(x)}f(x)|^{2} )^{1/2} \|_{L^{p}_{x}(w)} \lesssim \| f \|_{L^{p}(w)},
\]
where $t_{1}\leq t_{2}\leq\dots$ are stopping times and the bound does not depend on the stopping times.
By truncation we may assume $t_{0} = -\infty$ and $t_{J}=+\infty$ for some $J$.
Writing $r_{j}$ for the Rademacher functions the left-hand side can be estimated by
\[
\| \| \sum_{j} r_{j}(s)(E_{t_{j+1}(x)}f(x)-E_{t_{j}(x)}f(x)) \|_{L^{p}_{s}} \|_{L^{p}_{x}(w)}
=
\| \| \sum_{j} r_{j}(s) \sum_{k=t_{j}(x)}^{t_{j+1}(x)-1} d_{k}(x) \|_{L^{p}_{x}(w)} \|_{L^{p}_{s}},
\]
where $d_{k} = E_{k+1}f-E_{k}f$.
Now the $L^{p}_{x}(w)$ norm on the right-hand side can be written as
\[
\| \sum_{k} r_{\max \{j : t_{j}(x) \leq k\}}(s) d_{k}(x) \|_{L^{p}_{x}(w)}.
\]
The function $x\mapsto r_{\max \{j : t_{j}(x) \leq k\}}(s)$ is constant at scale $2^{-k}$ since $t_{j}$ are stopping times, so this is bounded by $\|f\|_{L^{p}(w)}$ uniformly in $s$ in view of the weighted bound for the Haar multipliers (which extends to all $1<p<\infty$ by extrapolation), see \cite[Theorem 3.1]{MR1748283} for the case $d=1$ and \cite[Theorem 1.6]{MR2657437} for the general case.
Integrating over $s$ we obtain the claim.
\end{proof}

\section{Weak type \texorpdfstring{$(1,1)$}{(1,1)} bounds}
\begin{proposition}
\label{prop:weak11}
Let $1<q<\infty$, let $w$ be an $A^{1}$ weight and let $T$ be a sublinear operator that is bounded on $L^{q}(w)$ and pointwise bounded by a finite linear combination of operators of the form $\r$ for some $1<r\leq q$.
Then $T : L^{1}(\ell^{q},w) \to L^{1,\infty}(\ell^{q},w)$.
\end{proposition}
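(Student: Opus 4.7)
The plan is a weighted vector-valued Calder\'on--Zygmund decomposition argument. Fix $\lambda > 0$ and apply the classical stopping-time construction to the scalar function $F(x) := \|f(x)\|_{\ell^q}$ at level $\lambda$ to produce disjoint dyadic cubes $\{Q_j\}$ covering $\Omega = \{MF > \lambda\}$ with $\lambda < \langle F \rangle_{Q_j} \lesssim \lambda$, so that by the $A_1$ inequality $w(Q_j) \leq [w]_{A_1}|Q_j|\inf_{Q_j}w \leq [w]_{A_1}\lambda^{-1}\int_{Q_j}F\,w\,\dif x$, giving $w(\Omega) \lesssim \lambda^{-1}\|F\|_{L^1(w)}$. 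Decompose coordinate-wise $f_i = g_i + b_i$ in the standard way, with $g_i := f_i 1_{\Omega^c} + \sum_j \langle f_i\rangle_{Q_j}1_{Q_j}$ and $b_i^{Q_j} := (f_i - \langle f_i\rangle_{Q_j})1_{Q_j}$. Minkowski in $\ell^q$ yields $\|g\|_{L^\infty(\ell^q)} \lesssim \lambda$ and $\|g\|_{L^1(\ell^q,w)} \lesssim \|F\|_{L^1(w)}$, hence $\|g\|_{L^q(\ell^q,w)}^q \lesssim \lambda^{q-1}\|F\|_{L^1(w)}$. The scalar $L^q(w)$ bound on $T$ extends coordinate-wise to an $L^q(\ell^q,w)$ bound by Fubini, so Chebyshev at power $q$ disposes of $\{\|Tg\|_{\ell^q} > \lambda/2\}$.

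Set $\Omega^* := \bigcup_j 3Q_j$, so that $w(\Omega^*) \lesssim \lambda^{-1}\|F\|_{L^1(w)}$ by doubling. For the bad part we may treat one summand $\r$ of the hypothesis at a time (the weak quasi-norm $L^{1,\infty}(\ell^q,w)$ is subadditive on finite sums), fixing the associated exponent $r_0 \in (1,q]$. Using Chebyshev at the power $r_0$ reduces the bad-part bound to controlling
\[
\int_{(\Omega^*)^c} \|Tb(x)\|_{\ell^q}^{r_0}\, w(x)\,\dif x \lesssim \lambda^{r_0-1}\|F\|_{L^1(w)}.
\]
Coordinate-wise, the hypothesis gives $|Tb_i(x)| \lesssim \r b_i(x)$, and Corollary~\ref{cor:reverse-holder-single-scale} applied to the CZ collection at each scale $k$, then summed using $\sum_{2^k \gtrsim R}2^{k(\alpha-d)r_0} \approx R^{(\alpha-d)r_0}$ (valid for $\alpha < d$), yields for every $\alpha > (d-1)/r_0'$ the pointwise estimate
\[
\r b_i(x)^{r_0} \lesssim \sum_j \dist(x,Q_j)^{(\alpha-d)r_0}\diam(Q_j)^{-\alpha r_0}\|b_i^{Q_j}\|_1^{r_0}, \qquad x \in (\Omega^*)^c.
\]

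The essential step is to pass from this coordinate-wise bound to an $\ell^q$ bound. Writing $u_{i,j}(x) := \dist(x,Q_j)^{\alpha-d}\diam(Q_j)^{-\alpha}\|b_i^{Q_j}\|_1$, the inequality above says $\r b_i(x) \lesssim \|(u_{i,j})_j\|_{\ell^{r_0}_j}$, so Minkowski's integral inequality (applicable since $r_0 \leq q$) lets one exchange the mixed norms $\ell^q_i \ell^{r_0}_j$ and $\ell^{r_0}_j \ell^q_i$. A second Minkowski step in the integral, $(\sum_i \|b_i^{Q_j}\|_1^q)^{1/q} \leq \|b^{Q_j}\|_{L^1(\ell^q)}$, then gives
\[
\|Tb(x)\|_{\ell^q}^{r_0} \lesssim \sum_j \dist(x,Q_j)^{(\alpha-d)r_0}\diam(Q_j)^{-\alpha r_0}\|b^{Q_j}\|_{L^1(\ell^q)}^{r_0},
\]
and the stopping-time gives $\|b^{Q_j}\|_{L^1(\ell^q)} \lesssim \lambda|Q_j|$.

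Integrating in $x$, the tail integral around each $Q_j$ is estimated by dyadic shells: the $A_1$ bound $w(2^m Q_j) \lesssim 2^{md}[w]_{A_1}|Q_j|\inf_{Q_j}w$ combined with the geometric factor $2^{m(\alpha-d)r_0}$ on the shell of radius $\sim 2^m\diam(Q_j)$ produces
\[
\int_{(3Q_j)^c}\dist(x,Q_j)^{(\alpha-d)r_0}w(x)\,\dif x \lesssim [w]_{A_1}\diam(Q_j)^{(\alpha-d)r_0+d}\inf_{Q_j}w,
\]
provided the geometric series $\sum_m 2^{m((\alpha-d)r_0+d)}$ converges, i.e.\ $\alpha < d/r_0'$. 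The main technical obstacle, and the precise place where the assumption $r_0 > 1$ is used, is that the window $(d-1)/r_0' < \alpha < d/r_0'$ must be non-empty, which holds exactly when $r_0 > 1$. With $\alpha$ chosen in this window, the exponents of $\diam(Q_j)$ telescope to $|Q_j|$, so that one collects $\sum_j |Q_j|\inf_{Q_j}w \leq w(\Omega)$ and obtains
\[
\int_{(\Omega^*)^c}\|Tb\|_{\ell^q}^{r_0}w\,\dif x \lesssim [w]_{A_1}\lambda^{r_0}w(\Omega) \lesssim [w]_{A_1}^2\lambda^{r_0-1}\|F\|_{L^1(w)},
\]
which by Chebyshev at level $r_0$ closes the bad-part estimate and, together with the good-part and the bound for $w(\Omega^*)$, completes the proof.
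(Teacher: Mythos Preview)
Your argument is correct and follows essentially the same route as the paper's proof: a vector-valued Calder\'on--Zygmund decomposition of $F=\|f\|_{\ell^q}$, the good part handled via Fubini and the scalar $L^q(w)$ bound, the bad part reduced (via Chebyshev at exponent $r_0$) to an $L^{r_0}(\ell^q,w)$ estimate outside the enlarged exceptional set, which is then controlled by Corollary~\ref{cor:reverse-holder-single-scale}, Minkowski (using $r_0\le q$), and the $A_1$ tail estimate, with the decisive window $(d-1)/r_0'<\alpha<d/r_0'$ forcing $r_0>1$. The only organizational difference is that you sum over the scales $k$ pointwise before integrating in $x$, whereas the paper integrates first and sums over $k$ afterward; both orderings impose the same constraint $\alpha<d/r_0'$ and are equivalent.
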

This allows us to deduce part~\ref{thm:square:A1} of Theorem~\ref{thm:square} from part~\ref{thm:square:Ap} of that theorem and part~\eqref{eq:thm:1} of Theorem~\ref{thm:var} from part~\eqref{eq:thm:p} of that theorem.
\begin{proof}
The argument is adapted from \cite{MR1645330,MR1996394}.
By homogeneity it suffices to prove
\[
w\{ x : (\sum_{i} |Tf_{i}|^{q})^{1/q} > 1\}
\lesssim
\| (\sum_{i} |f_{i}|^{q})^{1/q} \|_{L^{1}(w)}.
\]
We use the Calderón--Zygmund decomposition as in \cite{MR0284802}.
Let $F = (\sum_{i} |f_{i}|^{q})^{1/q}$, then there exist disjoint cubes (denoted by $Q$) such that $\|F\|_{L^{\infty}(\R \setminus \cup Q)} \leq 1$, $\sum_{Q} |Q| \lesssim \|F\|_{1}$, and $\|F\|_{L^{1}(Q)} \lesssim |Q|$.
Let
\[
g_{j}(x) =
\begin{cases}
|Q|^{-1} \int_{Q} f_{j}, & x\in Q,\\
f_{j}(x), & x\not\in\cup Q
\end{cases}
\]
and
\[
b_{j} = \sum_{Q} b_{j}^{Q},
\quad
b_{j}^{Q}(x) =
\begin{cases}
f_{j}(x) - |Q|^{-1} \int_{Q} f_{j}, & x\in Q,\\
0, & x\not\in\cup Q
\end{cases}
\]
Let $G = (\sum_{j} |g_{j}|^{q})^{1/q}$, then $G(x)=F(x)$ for $x\not\in\cup Q$ and
\[
G(x)
=
(\sum_{j} ||Q|^{-1} \int_{Q} f_{j}|^{q})^{1/q}
\leq
|Q|^{-1} \int_{Q} (\sum_{j} |f_{j}|^{q})^{1/q}
=
|Q|^{-1} \| F \|_{L^{1}(Q)}
\lesssim 1
\]
for $x\in Q$.
Hence $\|G\|_{L^{1}(w)} \leq \|F\|_{L^{1}(w)}$ and $\|G\|_{\infty} \lesssim 1$, so we get the required weak type bound for $(g_{j})_{j}$ from the strong type $(q,q)$ bound.

Note
\[
w(5Q)
\leq
\int_{Q} |F| w(5Q)/|Q|
\lesssim
\int_{Q} |F| Mw
\lesssim
\int_{Q} |F| w,
\]
so with $\tilde E := \cup_{Q} 5Q$ it suffices to show
\begin{equation}
\label{eq:outside-exceptional}
w\{ x\not\in \tilde E : (\sum_{i} |Tb_{i}|^{q})^{1/q} > 1 \}
\lesssim
\sum_{Q} w(Q).
\end{equation}
By the hypothesis it suffices to show
\begin{equation}
\label{eq:outside-exceptional-Lr}
\int_{\R^{d} \setminus \tilde E} (\sum_{i} (\sum_{k} (\r_{k}b_{i})^{r})^{q/r})^{r/q} w
\lesssim
\sum_{Q} w(Q).
\end{equation}

Let $\alpha>(d-1)/r'$ be chosen later.
For $x\not\in E$ only cubes with $\diam(Q) \lesssim 2^{k}$ contribute to $\r_{k}$.
Thus by Corollary~\ref{cor:reverse-holder-single-scale} we have
\[
(\r_{k} b_{i})^{r}(x)
\lesssim
2^{k\alpha r-kdr} \sum_{Q : \dist(x,Q)\lesssim 2^{k}} \diam(Q)^{-\alpha r} \|b_{i}^{Q}\|_{1}^{r}.
\]
Hence the left-hand side of \eqref{eq:outside-exceptional-Lr} can be estimated by
\begin{equation}
\label{eq:outside-exceptional-rev-holder}
\int_{\R^{d} \setminus \tilde E} (\sum_{i} (\sum_{k} 2^{k(\alpha-d) r} \sum_{Q} 1_{\dist(\cdot,Q)\lesssim 2^{k}}(x) \diam(Q)^{-\alpha r} \|b_{i}^{Q}\|_{1}^{r})^{q/r})^{r/q} w(x) \dif x.
\end{equation}
By Minkowski's inequality this is bounded by
\[
\sum_{Q} \diam(Q)^{-\alpha r} (\sum_{i} \|b_{i}^{Q}\|_{1}^{q})^{r/q} \sum_{k} 2^{k(\alpha-d) r} \int_{\R^{d} \setminus \tilde E}  1_{\dist(\cdot,Q)\lesssim 2^{k}}(x) w(x) \dif x.
\]
Note that
\begin{equation}
\label{eq:sum-biQ-L1}
\sum_{i} \|b_{i}^{Q}\|_{1}^{q}
\leq
\big(\int_{Q} (\sum_{i} |b_{Q}^{i}|^{q})^{1/q} \big)^{q}
\lesssim
\|F\|_{L^{1}(Q)}^{q}
\lesssim
|Q|^{q}
\end{equation}
by Minkowski's inequality.
Hence we obtain the bound
\[
\sum_{Q} \diam(Q)^{-\alpha r} |Q|^{r} \sum_{k : 2^{k} \gtrsim \diam(Q)} 2^{k(\alpha-d) r} \int_{\R^{d}}  1_{\dist(\cdot,Q)\lesssim 2^{k}}(x) w(x) \dif x.
\]
The latter integral can be estimated by $2^{kd} \min_{x\in Q}Mw(x)$, and the sum over $k$ is finite provided $(\alpha-d)r+d<0$.
In this case we obtain the bound
\begin{multline*}
\sum_{Q} \diam(Q)^{-\alpha r} |Q|^{r} \diam(Q)^{(\alpha-d) r+d} \min_{x\in Q}Mw(x)\\
\lesssim
\sum_{Q} |Q|^{r} \diam(Q)^{d(1-r)} \min_{x\in Q}w(x)\\
\approx
\sum_{Q} |Q| \min_{x\in Q}w(x)\\
\leq
\sum_{Q} w(Q).
\end{multline*}
as required.
Note that the restrictions on $\alpha$ imposed here are equivalent to $\frac{d-1}{r'} < \alpha < \frac{d}{r'}$, so an appropriate $\alpha$ can be chosen whenever $r>1$.
\end{proof}

The above proof also yields an $A_{1}$ weighted weak type $(1,1)$ estimate for the jump counting function.

\section{The BMO bound}
Finally we prove \eqref{eq:thm:bmo} in Theorem~\ref{thm:var}.
Suppose $\| (\sum_{i} |f_{i}|^{q})^{1/q} \|_{\infty}\leq 1$ and let $Q$ be a cube.
It suffices to show
\[
\fint_{Q} (\sum_{i} \V{A_{t}f_{i}(x) - c_{t,i,Q}}{t}^{q})^{1/q} \dif x \lesssim 1
\]
for some choice of functions $c_{t,i,Q}$.
We split $f=g+b$, where $g=f 1_{3Q}$.
For the local part $g$ we have $\|g\|_{L^{q}(\ell^{q})} \lesssim |Q|^{1/q}$, and it follows from the scalar-valued case of the Bourgain--Lépingle inequality \ref{eq:thm:p} that
\[
\| (\sum_{i} \V{A_{t}g_{i}(x)}{t}^{q})^{1/q} \|_{L^{q}_{x}} \lesssim \|g\|_{L^{q}(\ell^{q})}.
\]
The required estimate for the local part then follows by an application of the Cauchy--Schwarz inequality in the $x$ variable.

It remains to treat the global part $b$.
We set $c_{t,i,Q}=\fint_{Q}A_{t}b_{i}(y)\dif y$.
We have
\[
\fint_{Q} (\sum_{i} \V{A_{t}b_{i}(x) - c_{t,i,Q}}{t}^{q})^{1/q} \dif x
\leq
\fint_{Q}\fint_{Q} (\sum_{i} \V{A_{t}b_{i}(x) - A_{t}b_{i}(y)}{t}^{q})^{1/q} \dif x \dif y,
\]
so it suffices to show
\begin{equation}
\label{eq:BMO-local}
(\sum_{i} \V{A_{t}b_{i}(x) - A_{t}b_{i}(y)}{t}^{q})^{1/q} \lesssim 1
\end{equation}
uniformly in $x,y\in Q$.
Estimating the $r$-variational norm by the $\min(r,q)$-variational norm we may assume $r\leq q$ for the remaining part of the proof.
By Lemma~\ref{lem:besov-vr} we have
\[
\V{A_{t}b_{i}(x) - A_{t}b_{i}(y)}{t}
\lesssim
\|A_{t}b_{i}(x) - A_{t}b_{i}(y)\|_{L^{r}_{t}}^{1-1/r}
\|\tdif{}{t}(A_{t}b_{i}(x) - A_{t}b_{i}(y))\|_{L^{r}_{t}}^{1/r}.
\]
By Cauchy--Schwarz the left-hand side of \eqref{eq:BMO-local} can be estimated by
\[
(\sum_{i} \|A_{t}b_{i}(x) - A_{t}b_{i}(y)\|_{L^{r}_{t}}^{q})^{(1-1/r)/q}
(\sum_{i} \|\tdif{}{t}(A_{t}b_{i}(x) - A_{t}b_{i}(y))\|_{L^{r}_{t}}^{q})^{(1/r)/q}.
\]
By Minkowski's integral inequality this is bounded by
\begin{equation}
\label{eq:BMO-local2}
\|(\sum_{i} |A_{t}b_{i}(x) - A_{t}b_{i}(y)|^{q})^{1/q}\|_{L^{r}_{t}}^{1-1/r}
\|(\sum_{i} |\tdif{}{t}(A_{t}b_{i}(x) - A_{t}b_{i}(y))|^{q})^{1/q}\|_{L^{r}_{t}}^{1/r}.
\end{equation}
Note that $A_{t}b_{i}(x) = A_{t}b_{i}(y) = 0$ if $t<\diam(Q)$.
For $t>\diam(Q)$ we write
\[
A_{t}b_{i}(x) - A_{t}b_{i}(y)
=
\frac{C}{t^{d}} \int (1_{B(x,t)}-1_{B(y,t)}) b_{i}
\]
and use the fact that
\begin{equation}
\label{eq:smoothness}
|B(x,t) \Delta B(y,t)| \lesssim |x-y| t^{d-1} \lesssim \diam(Q) t^{d-1}.
\end{equation}
This gives the estimate
\begin{multline*}
\|(\sum_{i} |A_{t}b_{i}(x) - A_{t}b_{i}(y)|^{q})^{1/q}\|_{L^{r}_{t>\diam(Q)}}\\
\lesssim
\| t^{-d} \int |1_{B(x,t)}-1_{B(y,t)}| (\sum_{i} |b_{i}|^{q})^{1/q}\|_{L^{r}_{t>\diam(Q)}}\\
\lesssim
\diam(Q) \| t^{-1} \|_{L^{r}_{t>\diam(Q)}}
\lesssim
\diam(Q)^{1/r}
\end{multline*}
for the first factor in \eqref{eq:BMO-local2}.
In the second factor we split
\[
|\tdif{}{t}(A_{t}b_{i}(x) - A_{t}b_{i}(y))|
\lesssim
t^{-d-1} |\int_{B(x,t) \Delta B(y,t)} b_{i}| + t^{-d} |\tdif{}{t}\int_{B(x,t)} b_{i}| + t^{-d} |\tdif{}{t} \int_{B(y,t)} b_{i}|
\]
and estimate the contributions of the first and the second summand separately (the third summand is entirely analogous to the second).
The contribution of the first summand is estimated as above and gives $\|\dots\|_{L^{r}}\lesssim\diam(Q)^{-1+1/r}$.
In the second summand note
\[
\tdif{}{t}\int_{B(x,t)} b_{i} \approx \int_{\partial B(x,t)} b_{i},
\]
and analogously to the above case we again obtain $\|\dots\|_{L^{r}}\lesssim\diam(Q)^{-1+1/r}$.
Combining these estimates we see that \eqref{eq:BMO-local2} is bounded by an absolute constant, and this concludes the proof.

\appendix
\section{A good \texorpdfstring{$\lambda$}{lambda} inequality}
The good $\lambda$ inequality below provides an alternative way to deduce part~\ref{thm:square:Ap} of Theorem~\ref{thm:square} from the special case $p=2$, $w\equiv 1$.
\begin{proposition}
Suppose $\s : L^{p} \to L^{p,\infty}$ (unweighted) for some $1\leq p<\infty$.
Let $w$ be a weight satisfying the $A_{\infty}$ condition \eqref{eq:Ainfty}.
Then for each $\lambda > 0$ and $A>1$ we have
\[
w\{ \s f > A \lambda, M^{\sharp}_{p} f \leq \gamma \lambda \}
\lesssim_{p}
C_{w} \big( \frac{\gamma}{A-1} \big)^{p\delta} w \{ \s f > \lambda \}.
\]
\end{proposition}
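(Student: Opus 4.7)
The plan is a standard good-$\lambda$ argument via Calderón--Zygmund / Whitney decomposition of the level set, combined with a local/global splitting that exploits the constant invariance of $\s$.

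Setup and reduction via $A_\infty$. Let $E_\lambda = \{\s f > \lambda\}$; by the unweighted weak-$(p,p)$ hypothesis we may reduce to $f$ for which $E_\lambda$ has finite Lebesgue measure and decompose $E_\lambda$ into a disjoint family of Whitney cubes $\{Q_j\}$ with $\diam(Q_j) \approx \dist(Q_j, E_\lambda^c)$. Applying \eqref{eq:Ainfty} to $E = Q_j \cap \{\s f > A\lambda,\ M^\sharp_p f \leq \gamma\lambda\} \subset Q_j$ and summing over $j$ (using $\sum_j w(Q_j) \lesssim w(E_\lambda)$) reduces the claim to the Lebesgue-measure estimate
\[
|Q_j \cap \{\s f > A\lambda,\ M^\sharp_p f \leq \gamma\lambda\}| \lesssim \bigl(\tfrac{\gamma}{A-1}\bigr)^{p} |Q_j|
\]
on each Whitney cube.

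Localization on a single $Q_j$. Assume $Q_j$ contains some $\bar x$ with $M^\sharp_p f(\bar x) \leq \gamma\lambda$ (otherwise the set is empty) and, using the Whitney property, fix $\hat x \in E_\lambda^c$ with $|\hat x - \bar x| \lesssim \diam(Q_j)$, so $\s f(\hat x) \leq \lambda$. Choose a dilate $\hat Q \supset Q_j$ of side length $\approx \diam(Q_j)$ containing $\hat x$ and a constant $c$ with $\bigl(\fint_{\hat Q}|f-c|^p\bigr)^{1/p} \lesssim \gamma\lambda$ (available from the sharp maximal bound at $\bar x \in \hat Q$). Since $\s$ is invariant under adding a constant (its defining averages $A_t$ and $E_k$ both preserve constants), we work with $F := f-c$ and split $F = g + b$ with $g = F \cdot 1_{\hat Q}$ and $b = F - g$.

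Local/global estimates. The local piece $g$ satisfies $\|g\|_p^p \lesssim |Q_j|(\gamma\lambda)^p$, so the assumed unweighted weak-$(p,p)$ bound gives $|\{\s g > \eta \lambda\}| \lesssim (\gamma/\eta)^p |Q_j|$ for every $\eta > 0$. For the global piece we aim at $\s b(x) \leq \lambda + O(\gamma\lambda)$ uniformly in $x \in Q_j$. The key input is the scale-by-scale structural bound $\s_k h(x) \lesssim \inf_{c'} 2^{-kd} \int_{B(x, C 2^k)} |h - c'|$ (valid for any $h$, and already used in the proof of Theorem~\ref{thm:square}\ref{thm:square:Ap}): it forces $\s_k b(x) = 0$ for $2^k \ll \diam(Q_j)$ because $b$ vanishes on a neighborhood of $x$, and it lets one compare $\s_k b(x)$ with $\s_k b(\hat x)$ at scales $2^k \gtrsim \diam(Q_j)$ using the Lipschitz behavior $|B(x,t)\Delta B(\hat x,t)| \lesssim \diam(Q_j)\, t^{d-1}$ from \eqref{eq:smoothness}. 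Combined with the sublinear bound $\s b(\hat x) \leq \s F(\hat x) + \s g(\hat x) \leq \lambda + O(\gamma\lambda)$ this yields the required control on $\s b$. Then $\s f \leq \s g + \s b$ implies $Q_j \cap \{\s f > A\lambda\} \subset \{\s g > \tfrac{A-1}{2}\lambda\}$ (once $\gamma$ is absorbed), and the weak bound on $\s g$ finishes the proof.

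Main obstacle. The principal technical difficulty is producing the $\ell^2$-summable (in $k$) estimate on $\s b$ that yields $\s b(x) \leq \lambda + O(\gamma\lambda)$: the naive per-scale bound $\s_k b(x) \lesssim \gamma\lambda$ is not square-summable over the infinitely many large scales. This forces one to keep the Lipschitz gain $\diam(Q_j)/2^k$ coming from \eqref{eq:smoothness} (or an equivalent telescoping between the two nearby points $x,\hat x \in \hat Q$) throughout the large-scale sum, so that $\bigl(\sum_{k:\, 2^k \gtrsim \diam(Q_j)} |\s_k b(x) - \s_k b(\hat x)|^2\bigr)^{1/2}$ remains $O(\gamma\lambda)$.
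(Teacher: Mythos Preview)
Your Whitney/$A_{\infty}$ reduction and the handling of the local piece $g$ are fine, but the control of the global piece $b$ at large scales has a genuine gap. The Lipschitz estimate \eqref{eq:smoothness} bounds $|A_{t}b(x)-A_{t}b(\hat x)|$ pointwise in $t$, not its $V^{2}$-norm, and more importantly it says nothing about the dyadic supremum built into $\s_{k}$. Two nearby points $x\in Q_{j}$ and $\hat x\in E_{\lambda}^{c}$ can lie in \emph{different} dyadic cubes $\mathcal{Q}_{k}(x)\neq\mathcal{Q}_{k}(\hat x)$ at every scale $k$ (take them on opposite sides of a dyadic hyperplane); then $3\mathcal{Q}_{k}(x)$ and $3\mathcal{Q}_{k}(\hat x)$ differ by a region of full volume $\sim 2^{kd}$, and the only available bound is $|\s_{k}b(x)-\s_{k}b(\hat x)|\lesssim\gamma\lambda$ with no decay in $k$. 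The sum $\sum_{k}|\s_{k}b(x)-\s_{k}b(\hat x)|^{2}$ therefore diverges, which is precisely the obstacle you flag but do not resolve. (There is also a smaller issue: your claim $\s g(\hat x)=O(\gamma\lambda)$ fails, since at small scales $\s_{k}g(\hat x)=\s_{k}f(\hat x)$ and one only gets $\s g(\hat x)\leq\lambda+O(\gamma\lambda)$.)

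The paper avoids all of this by exploiting the dyadic structure of $\s$ directly. It takes the \emph{maximal dyadic} cubes $Q$ in $\{\s f>\lambda\}$ (not Whitney cubes) and splits in \emph{scales} rather than splitting $f$: for $Q$ of side $2^{j}$ and $x\in Q$, one writes $(\s f(x))^{2}\lesssim\sum_{k\leq j}(\cdots)+\sum_{k>j}(\cdots)$. The crucial observation is that $\s_{k}$ is constant on dyadic cubes of side $2^{k}$, so for $k>j$ the term $\s_{k}f(x)$ agrees with $\s_{k}f(y)$ for any $y$ in the dyadic parent $Q^{*}$ of $Q$; by maximality $Q^{*}$ contains some $y$ with $\s f(y)\leq\lambda$, whence $\sum_{k>j}\s_{k}f(x)^{2}\leq\lambda^{2}$ exactly. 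The small-scale sum $\sum_{k\leq j}$ is local and equals $\sum_{k\leq j}\s_{k}((f-c)1_{CQ})^{2}$ by constant invariance, so the weak $(p,p)$ bound applied to $(f-c)1_{CQ}$ and the sharp-maximal control at $z$ finish the argument. No comparison between two nearby points and no Lipschitz estimate is needed; replacing your Whitney cubes by maximal dyadic cubes is what makes the large-scale part trivial.
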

\begin{proof}
We decompose
\[
\{ \s f > \lambda \} = \bigcup_{\mathcal{Q}} Q
\]
into a disjoint union of maximal dyadic cubes $Q$.
We have
\begin{align*}
\frac{w \{\s f > A \lambda, M_{p}^{\sharp} f \leq \gamma \lambda \}}{w\{ \s f > \lambda \}}
&\leq
\sup_{Q\in\mathcal{Q}} \frac{w \{\s f > A \lambda, M_{p}^{\sharp} f \leq \gamma \lambda, Q \}}{w(Q)}\\
&\leq
C_{w} \sup_{Q\in\mathcal{Q}} \big(\frac{|Q\cap \{\s f > A \lambda, M_{p}^{\sharp} f \leq \gamma \lambda\} |}{|Q|}\big)^{\delta}
\end{align*}
by the $A_{\infty}$ condition.
Fix $Q\in\mathcal{Q}$, we need to find a bound for the ratio on the right-hand side.
Now, if $M^{\sharp}_{p} f  > \gamma \lambda$ on $Q$, then the ratio is zero, so we assume that there exists some $z = z_Q$ such that
\[
M_{p}^{\sharp}f (z) \leq \gamma \lambda.
\]
Suppose $Q\in\sigma_{j}$.
Now, for $x \in Q$, we may express
\[
(\s f)^2 (x)
\lesssim
\sum_{k \leq j} \sum_{Q' \sim \mathcal{Q}_{k}(x)} S_k f(x_{Q'})^2
+
\sum_{k > j} \sum_{Q' \sim \mathcal{Q}_{k}(x)} S_k f(x_{Q'})^2.
\]

Here $Q' \sim \mathcal{Q}_{k}(x)$ means the sum is taken over all neighbors $Q'$ of $\mathcal{Q}_{k}(x)$, the unique dyadic cube with side length $2^k$ containing $x$, and $x_{Q'}$ is a point at which the supremum over $Q$ is almost attained.

By maximality of $Q$, the second term is $\leq \lambda^{2}$.
On the other hand, the first term in the sum may be expressed as
\[
\sum_{k \leq j} \sum_{Q' \sim \mathcal{Q}_{k}(x)} S_k ((f-c)1_{CQ})(x_{Q'})^2
\leq
(\s ((f-c) 1_{CQ} ))^{2}
\]
for any $c$.
Thus
\[
Q\cap \{ \s f > A \lambda, M^{\sharp}_{p} f \leq \gamma \lambda \} \subset
Q\cap \{ \s ((f-c) \cdot 1_{CQ} ) > (A-1) \lambda \}.
\]
Using the weak-type $(p,p)$ boundedness of $\s$, we may estimate the measure of this final set by
\[
\frac{1}{(A-1)^{p} \lambda^{p}} \|\s\|_{L^p \to L^{p,\infty} }^{p} \int_{CQ} |f-c|^{p}.
\]
Minimizing over $c$ we obtain the estimate
\[
\frac{1}{(A-1)^{p} \lambda^{p}} |CQ| M^{\sharp}_{p}f(z)^{p}
\lesssim \frac{\gamma^{p}}{(A-1)^{p}} |Q|.
\qedhere
\]
\end{proof}

\section{A Sobolev embedding theorem for \texorpdfstring{$r$}{r}-variation}
For completeness we recall a simple estimate for the $r$-variational norm of a function on $\R$ in terms of $L^{r}$ norms.
\begin{lemma}
\label{lem:besov-vr}
Let $X$ be a normed space, $1\leq r<\infty$, and $a: [0,T] \to X$.
Then
\[
\hV{a}{} \leq 8 \|a\|_{L^{r}(X)}^{1-1/r} \|a'\|_{L^{r}(X)}^{1/r}.
\]
\end{lemma}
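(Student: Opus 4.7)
The plan is to control a single increment $\|a(t_j) - a(t_{j-1})\|$ by two complementary estimates and balance them against a threshold. The first estimate comes from Hölder's inequality applied to $a(t_j)-a(t_{j-1}) = \int_{t_{j-1}}^{t_j} a'(s)\,\dif s$, which yields
\[
\|a(t_j)-a(t_{j-1})\|^r \leq (t_j - t_{j-1})^{r-1}\int_{t_{j-1}}^{t_j}\|a'(s)\|^r\,\dif s.
\]
Fix a parameter $\lambda > 0$ (to be optimized) and any admissible partition $t_0 < \dots < t_J$; call an increment \emph{short} if $t_j - t_{j-1} \leq \lambda$ and \emph{long} otherwise. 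Summing the display above over short increments and invoking the disjointness of the underlying intervals immediately gives $\sum_{\text{short}}\|a(t_j)-a(t_{j-1})\|^r \leq \lambda^{r-1}\|a'\|_{L^r(X)}^r$.

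For each long increment I would insert two auxiliary points. Set $I_j^- := [t_{j-1},\, t_{j-1}+\lambda/2]$ and $I_j^+ := [t_j-\lambda/2,\, t_j]$; these are disjoint because $t_j - t_{j-1} > \lambda$. Pick $s_j^\pm \in I_j^\pm$ minimizing $\|a(\cdot)\|^r$; since a minimum is dominated by the average, $\|a(s_j^\pm)\|^r \leq (2/\lambda)\int_{I_j^\pm}\|a\|^r$. Telescope $a(t_j)-a(t_{j-1})$ through $s_j^-$ and $s_j^+$, estimate the two outer differences by the Hölder bound from the first step (each over an interval of length $\leq \lambda/2$) and the middle one by $\|a(s_j^+)\| + \|a(s_j^-)\|$. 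Because the selected half-intervals for distinct long $j$ are pairwise disjoint subsets of $[0,T]$, summing delivers
\[
\sum_{\text{long}}\|a(t_j)-a(t_{j-1})\|^r \lesssim \lambda^{r-1}\|a'\|_{L^r(X)}^r + \lambda^{-1}\|a\|_{L^r(X)}^r.
\]

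Combining the short and long contributions and optimizing $\lambda = \|a\|_{L^r(X)}/\|a'\|_{L^r(X)}$ yields $\sum_j \|a(t_j)-a(t_{j-1})\|^r \lesssim \|a\|_{L^r(X)}^{r-1}\|a'\|_{L^r(X)}$. Passing to the supremum over partitions and taking $r$-th roots gives the desired inequality. The one subtle point, and the main obstacle, is the \emph{uniform} constant $8$: the intermediate triangle inequalities generate multiplicative factors of the form $2^{r-1}$ and $3^{r-1}$, but after the $r$-th root these are dominated by absolute constants (since $C^{(r-1)/r} \leq C$), so the final multiplicative constant does not depend on $r$, and a direct accounting shows it can be taken $\leq 8$.
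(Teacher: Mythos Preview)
Your argument is correct but proceeds differently from the paper's. The paper fixes an equispaced grid $D=\{jT/L\}$, splits $\hV{a}{}$ into the variation \emph{along} a shifted grid $\delta+D$ and the short variations \emph{within} consecutive grid cells, and then uses an averaging (pigeonhole) argument over the shift $\delta\in[0,T/L]$ to bound $\|a(t)\|_{\ell^r_{t\in\delta+D}}$ by $(T/L)^{-1/r}\|a\|_{L^r}$; the short part is handled by $\hV[1]{a}{}\leq\|a'\|_{L^1}$ and H\"older. Optimizing the integer $L$ forces a separate treatment of the regime $T\|a'\|_{L^r}<\|a\|_{L^r}$. By contrast you work with an arbitrary partition from the outset, classify each increment as short or long relative to a continuous threshold $\lambda$, and handle long increments by inserting points $s_j^{\pm}$ that minimize $\|a(\cdot)\|$ on half-intervals of length $\lambda/2$ (the minimum-$\leq$-average step playing the role of the paper's pigeonhole over $\delta$). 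Your route avoids both the integrality issue and the endpoint case split; carrying the constants through your telescope with the four-term bound $(\sum_{1}^4 u_i)^r\le 4^{r-1}\sum u_i^r$ actually yields $(1+2^{r-1}+2\cdot 4^{r-1})^{1/r}\le 4$, so your constant is even better than the stated $8$. The two proofs rest on the same two ingredients---H\"older on $a'$ for short intervals and a pointwise-versus-average comparison for $a$---but organize the decomposition differently.
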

Note that the conclusion does not explicitly depend on $T$, so the estimate remains true for functions defined on infinite intervals.
\begin{proof}
Let $L>0$ be an integer chosen later and set $D:=\{0,\frac{T}{L},\dots,(L-1)\frac{T}{L}\}$.
Let also $\delta \in [0,\frac{T}{L}]$ be chosen later.
Splitting the variation into a short and a long part we get
\begin{align*}
\hV{a}{}
&\leq
\hV{a(t)}{t\in\delta+D} + 2 \big( \sum_{j=-1}^{L} \hV{a(t)}{t\in [0,T] \cap j\frac{T}{L}+\delta+[0,\frac{T}{L}]}^{r} \big)^{1/r}\\
&\leq
2 \| a(t) \|_{\ell^{r}_{t\in\delta+D}} + 2 \big( \sum_{j=-1}^{L} \hV[1]{a(t)}{t\in [0,T] \cap j\frac{T}{L}+\delta+[0,\frac{T}{L}]}^{r} \big)^{1/r}.
\end{align*}
We have $\|a\|_{L^{r}}^{r} = \int_{\delta=0}^{\frac{T}{L}} \| a(t) \|_{\ell^{r}_{t\in\delta+D}}^{r}$, so for some $\delta$ the first summand is bounded by $2 (T/L)^{-1/r} \|a\|_{L^{r}}$.
In the second summand we have estimated the $r$-variation norm by the $1$-variation norm, which can be estimated by the $L^{1}$ norm of the derivative.
Hence we obtain
\[
\hV{a}{}
\leq
2 (T/L)^{-1/r} \| a \|_{L^{r}} + 2 \big( \sum_{j=-1}^{L} \|a'\|_{L^{1}_{t\in [0,T] \cap j\frac{T}{L}+\delta+[0,\frac{T}{L}]}}^{r} \big)^{1/r}.
\]
We estimate the $L^{1}$ norm on the right-hand side by the $L^{r}$ norm using the Hölder inequality, this gives
\begin{align*}
\hV{a}{}
&\leq
2 (T/L)^{-1/r} \| a \|_{L^{r}} + 2 (T/L)^{1-1/r} \big( \sum_{j=-1}^{L} \|a'\|_{L^{r}_{t\in [0,T] \cap j\frac{T}{L}+\delta+[0,\frac{T}{L}]}}^{r} \big)^{1/r}\\
&=
2 (T/L)^{-1/r} \| a \|_{L^{r}} + 2 (T/L)^{1-1/r} \|a'\|_{L^{r}}.
\end{align*}
The optimal choice $L= \lfloor T \|a'\|_{L^{r}}/ \|a\|_{L^{r}} \rfloor$ gives the desired estimate unless $T \|a'\|_{L^{r}} < \|a\|_{L^{r}}$.
However, in the latter case we have the easy bound
\[
\hV{a}{}
\leq
\hV[1]{a}{}
\leq
\|a'\|_{L^{1}}
\leq
T^{1-1/r}\|a'\|_{L^{r}}
\leq
\|a\|_{L^{r}}^{1-1/r} \|a'\|_{L^{r}}^{1/r}.
\qedhere
\]
\end{proof}

\printbibliography
\end{document}
